\patchcmd{\thebibliography}{\chapter*}{\section*}{}{}
\newtheorem{theorem}{Theorem}[section]
\newtheorem{lemma}[theorem]{Lemma}
\newtheorem{proposition}[theorem]{Proposition}
\newtheorem{corollary}[theorem]{Corollary}
\newtheorem{claim}[theorem]{Claim}
\theoremstyle{definition}
\newtheorem{definition}[theorem]{Definition}
\newtheorem{example}[theorem]{Example}
\theoremstyle{remark}
\newtheorem{remark}[theorem]{Remark}
\newtheorem*{ack}{Acknowledgements}
\newcommand{\N}{\mathbb{N}}
\newcommand{\R}{\mathbb{R}}
\newcommand{\ra}{\rightarrow}
\DeclareMathAlphabet{\mathpzc}{OT1}{pzc}{m}{it}
\newcommand{\norm}[1]{\lVert #1 \rVert}
\numberwithin{equation}{section}
\DeclareMathAlphabet{\mathpzc}{OT1}{pzc}{m}{it}
\title{A typical number is extremely non-normal}
\author{Anastasios Stylianou}
\address{Mathematics Institute,
Zeeman Building,
University of Warwick,
Coventry CV4 7AL}
\email{Tasos.Stylianou@warwick.ac.uk}
\begin{document}
\begin{abstract}
Fix a positive integer $N\geq2$. For a real number $x\in[0,1]$ and a digit $i\in\{0, 1,...,N-1\}$, let $\Pi_i(x, n)$ denote the frequency of the digit $i$ among the first $n$ $N$-adic digits of $x$. It is well-known that for a typical (in the sense of Baire) $x\in[0, 1]$, the sequence of digit frequencies  diverges as $n\ra\infty$. In this paper we show that for any regular linear transformation $T$ there exists a residual set of points $x\in[0, 1]$ such that the $T$-averaged version of the sequence $(\Pi_i(x, n))_n$ also diverges significantly.
\end{abstract}
\maketitle
\section{Statement of results}
Fix a positive integer $N\geq2$. Throughout this paper, we will consider the unique, non-terminating, base $N$ expansion of a number $x\in[0, 1]$, written as $x = \frac{d_1(x)}{N}+ \frac{d_2(x)}{N^2} + ... + \frac{d_n(x)}{N^n}+ ...$ with $ d_i(x)\in\{0,1,...N-1\}$. For each digit $i\in\{0,1,...,N-1\}$, we will write \[ 
\Pi_i(x, n) :=\frac{1}{n}\left|\left\{   1\leq j\leq n\; :\; d_j(x)=i    \right\}\right|
\]
for the frequency of the digit $i$ among the first $n$ digits of $x$. Let $\Delta_N$ denote the family of $N$-dimensional probability vectors, that is
\[
\Delta_N :=\left\{\left (p_0, p_1,...,p_{N-1}\right)\; :\; p_i\geq 0,\;\sum_{i=0}^{N-1}p_i = 1  \right\}
\]
and write
$$
 \mathbf{\Pi}(x, n) = \left(\Pi_0(x, n),\Pi_1(x, n),...,\Pi_{N-1}(x, n)\right)
$$
for the vector of frequencies of digits. Then clearly for every natural number $n$ the vector of frequencies $\mathbf \Pi(x, n)$ lies in the simplex $\Delta_N$, and consequently, the set of accumulation points of the sequence $\left(\mathbf\Pi(x, n)\right)_n$ is a subset of $\Delta_N$.

Borel's famous normal number theorem states that for Lebesgue almost every point in $[0,1]$ its vector of frequencies converges towards the uniform probability vector $\left(\frac{1}{N},...,\frac{1}{N} \right)$ with respect to $\norm{.}_1$ as $n\ra\infty$. More generally Birkoff's Ergodic Theorem tells us that for each probability measure $\mu$ of $[0,1]$ which is invariant under the transformation $x\mapsto Nx \mod1$, the sequence $\mathbf \Pi(x, n)$ converges to a specific probability vector $p_\mu\in\Delta_N$ for $\mu$ almost every $x\in[0,1]$ as $n\ra\infty$. Yet still from a topological viewpoint there is a completely different picture.

We recall that in a metric space $X$, a set $R$ is called residual if its complement is of the first category. Also, recall that we say that a typical element $x\in X$ has property $P$ if the set $ R :=\{ x \in X \; :\;  x \text{ has property }P\}$ is residual. We refer the reader to Oxtoby \cite{Ox} for more details. Recently, the study of the limiting behaviour of the sequence of frequencies $\left(\Pi_i(x,n)\right)_n$ from a topological viewpoint has enjoyed a great interest from many authors \cite{Vo, Sa1, APT, CZ, Ol, Sa2, Si, SS, SS2}. In particular, perhaps unexpectedly, Olsen showed that for a typical $x\in[0, 1]$, the set of accumulation points of $(\mathbf\Pi(x, n))_n$ is all of $\Delta_N$.

\begin{theorem}[A, \cite{Ol}]\label{olsen} 
The set \[
\left\{x \in [0, 1]\;:\; \text{The set of the accumulation points of $(\mathbf\Pi(x, n))_n$ equals $\Delta_N$}\right\}
\]
is residual.
\end{theorem}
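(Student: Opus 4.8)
The plan is to prove this by exhibiting the target set as a countable intersection of open dense sets. Let $\{q_k\}_{k\ge1}$ be a countable dense subset of $\Delta_N$ (for instance the rational probability vectors). For each $k$ and each $m\ge1$, define
\[
U(k,m) := \left\{ x\in[0,1] \;:\; \exists\, n\ge m \text{ with } \norm{\mathbf\Pi(x,n) - q_k}_1 < \tfrac{1}{m} \right\}.
\]
I claim that $R := \bigcap_{k,m} U(k,m)$ is exactly the set in the statement. Indeed, if $x\in R$, then for every $k$ the vector $q_k$ is an accumulation point of $(\mathbf\Pi(x,n))_n$; since $\{q_k\}$ is dense and the set of accumulation points is closed, it must be all of $\Delta_N$. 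Conversely, membership in $R$ is clearly necessary. So the whole problem reduces to showing each $U(k,m)$ is open and dense.

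Openness is the easy part: whether $\norm{\mathbf\Pi(x,n)-q_k}_1 < 1/m$ for a particular $n$ depends only on the first $n$ digits $d_1(x),\dots,d_n(x)$, so the set of $x$ satisfying it for a fixed $n$ is a finite union of (half-open) $N$-adic intervals, hence contains an open neighbourhood of each of its points that is not a right endpoint; a small amount of care with the non-terminating convention handles the endpoints, and taking the union over $n\ge m$ preserves openness. The real work is density. Given any $N$-adic interval $I$ determined by a prefix $w = (a_1,\dots,a_\ell)$, I need to find a point of $U(k,m)$ inside $I$, i.e. a point whose expansion begins with $w$ and which, at some stage $n\ge m$, has digit-frequency vector within $1/m$ of $q_k$. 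The idea is to append to $w$ a long block consisting of the digit $i$ repeated $\lceil L p_i \rceil$-ish times for each $i$ (where $q_k = (p_0,\dots,p_{N-1})$), with $L$ chosen large enough that (a) $L \ge m$ and $\ell + L \ge m$, and (b) the fixed contribution of the prefix $w$ of bounded length $\ell$ becomes negligible, so that after this block the frequency vector at stage $n = \ell + (\text{block length})$ is within $1/m$ of $q_k$. Concretely one needs $\ell/(\ell+L)$ and the rounding errors $O(N/(\ell+L))$ to together be less than $1/m$, which holds once $L$ is large; then extend arbitrarily (say by all zeros) to get an actual point of $I$, which lies in $U(k,m)$.

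The main obstacle, such as it is, is purely bookkeeping: one must verify that appending a block with roughly $Lp_i$ copies of digit $i$ really does push the $\ell_1$-distance below $1/m$, tracking the three error sources — the prefix's bounded mass $\ell$, the integer-rounding discrepancy (at most $N$ in the count of each digit, hence $O(N/(\ell+L))$ in frequency), and the requirement $n\ge m$. Choosing, say, $L > mN(\ell+1)$ comfortably dominates all of them. There is also the minor nuisance that the author's convention fixes the \emph{non-terminating} expansion, so when I "extend by zeros" I should instead extend by, e.g., the digit $N-1$ infinitely often, or more simply append the block-then-a-single-nonzero-digit-then-zeros pattern in a way that still terminates properly; alternatively, just note that altering the tail does not affect $\mathbf\Pi(x,n)$ for the finitely many relevant $n$, and choose any non-terminating tail. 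None of this affects the frequency at the chosen stage $n$, so density follows, and with it the theorem via the Baire category theorem.
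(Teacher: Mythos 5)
Your proposal is correct and is essentially the same Baire-category argument that underlies the paper (which cites \cite{Ol} for this statement but proves the stronger Theorem \ref{main} by exactly this route: a countable intersection of open dense sets indexed by rational probability vectors, with density obtained by appending a long block realizing the target frequency vector). The only loose end is your openness claim: the sets $\{x : \norm{\mathbf\Pi(x,n)-q_k}_1<1/m\}$ are finite unions of half-open $N$-adic intervals, and taking a union over $n\ge m$ does \emph{not} by itself produce an open set at the right endpoints; the standard repair, used in the paper, is to work inside $\mathbb{I}$ (the non-terminating expansions), where every point is interior to its cylinders, and then note that $[0,1]\setminus\mathbb{I}$ is countable hence meager --- equivalently, replace each $U(k,m)$ by its interior and check that your density construction (whose output can be taken non-terminating) lands in that interior.
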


The result above has inspired a series of papers (\cite{Ol2, Ol3, Ol4, MAN, MAN2}) that appeared later in the literature. These later results were sometimes considering different settings such us continued fraction expansions, iterated function systems, finite or countable Markov partitions and other times enhancing the way in which the vectors of frequencies were diverging.
In this paper we offer a generalisation of the second form. Namely, Theorem \ref{main} states that for any regular linear transformation $T$, there exists a residual set of points $x$ whose set of accumulation points of their $T$-averaged vector of frequencies $\left(\Pi^T(x, m)\right)_m$ includes the simplex of all $N$-dimensional probability vectors. The precise definitions are given below.

Given a divergent sequence, by considering an averaged version of this sequence one may succeed in producing a convergent sequence. In this paper we prove a somewhat unexpected result that strengthens Theorem \ref{olsen} considerably. To do that we need the following definition.
\begin{definition}[\cite{hard}]
An $\mathbb{N}\times\mathbb{N}$ real valued matrix $T=\left[c_{m,n}\right]$, is called a \emph{linear transformation} if for any real sequence $(s_{n})_{n}$ and any natural number $m$, we have that $t_{m}=\sum_{n\geq1} c_{m,n} s_{n}$ converges to a real number. The sequence $(t_{m})_{m}$ is called the $T$-averaged version of $(s_{n})_{n}$ and if $\lim_{m\to\infty}t_{m}$ exists we call it the $T$-value of $(s_{n})_{n}$.
\end{definition}
\begin{example}
The linear transformation $T=\left[c_{m,n}\right]_{\mathbb{N}\times\mathbb{N}}$ with $c_{m,n}=$ $\begin{cases}
\frac{1}{m}\;\;\;\text{if $1\leq n\leq m$}\\
0\;\;\;\text{otherwise}
\end{cases}$
corresponds to considering the sequence of the natural averages of a sequence.
\end{example}

Namely, we show that for any linear transformation $T$, the $T$-averaged version of the sequence $(\mathbf\Pi(x, n))_n$ accumulates at every point in $\Delta_N$ for a residudal set of points in $[0,1]$. The special case of Cesàro averages was considered in \cite{Ol3}. To state this result a bit more precisely, we define a regular linear transformation below. 

Linear transformations are used to help generalise the sense of a limit of sequence, in order to provide a \emph{sensible} value of a limit for a divergent sequence. However, if a sequence is already convergent we would like the averaged version of this sequence to also converge to the same limit. We hence call a linear transformation $T=\left[c_{m,n}\right]$ \emph{regular}, if $T$ maps all convergent sequences to their original limit, that is $T$ is regular if
$$
t_{m}\rightarrow l\,\,\,\text{as}\,\,\,m\rightarrow\infty\;\;\text{ whenever }\;\;s_{k}\rightarrow l\,\,\,\text{as}\,\,\,k\rightarrow\infty.
$$

Fix a regular linear transformation $T=[c_{m,n}]_{\N\times\N}$ and consider the $T$-averaged version of the sequences of digit frequencies
$$
\Pi_i^T(x,m):=\sum_{n=1}^\infty c_{m,n}\,\Pi_i(x,n)\;\;\;\;\;\text{for } 0\leq i\leq N-1.
$$
 Then, the $T$-averaged version of the vector of digit frequencies is given by the sequence with $m$th term  \[
\mathbf\Pi^T(x,m):=\left(\Pi^T_0(x, m),\Pi^T_1(x, m),...,\Pi^T_{N-1}(x, m)\right)
\]

We now state our main theorem.
\begin{theorem}\label{main}
For any regular linear transformation $T$ the set
\[
R_T:=\left \{ x\in[0, 1]\; :\; \text{The sequence $\left(\mathbf\Pi^T(x, m)\right)_m$ accumulates at every point of $\Delta_N$} \right\},
\]
is residual.
\end{theorem}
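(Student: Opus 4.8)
\emph{Proof proposal.} The plan is to express $R_T$, up to a meagre set, as a countable intersection of open dense sets and then apply the Baire category theorem, following the usual scheme for results of this kind. Fix a countable $\norm{\cdot}_1$-dense subset $\{p_1,p_2,\dots\}$ of $\Delta_N$. Since $\Delta_N$ is a compact metric space, $x\in R_T$ precisely when for all $j,k,M\in\N$ there is some $m\geq M$ with $\norm{\mathbf\Pi^T(x,m)-p_j}_1<1/k$; hence
\[
R_T=\bigcap_{j,k,M\in\N}U(p_j,k,M),\qquad U(p,k,M):=\bigcup_{m\geq M}\bigl\{x\in[0,1]:\norm{\mathbf\Pi^T(x,m)-p}_1<\tfrac1k\bigr\}.
\]

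I would first dispose of a mild continuity issue. Write $D\subseteq[0,1]$ for the countable set of $N$-adic rationals and $Y:=[0,1]\setminus D$, a co-countable (hence Polish) subspace; since $D$ is meagre it suffices to prove $R_T\cap Y$ is residual in $Y$. For fixed $m$ the map $x\mapsto\mathbf\Pi^T(x,m)$ is continuous on $Y$: each $x\mapsto\mathbf\Pi(x,n)$ is locally constant on $Y$ (it depends only on the first $n$ digits, which are locally constant away from $D$), and the series $\sum_n c_{m,n}\mathbf\Pi(x,n)$ converges uniformly in $x$ because $\sum_n|c_{m,n}|<\infty$ — this finiteness, together with $c_{m,n}\to0$ columnwise and $\sum_n c_{m,n}\to1$, is the Silverman--Toeplitz characterisation of a regular linear transformation \cite{hard}. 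Consequently each set $\{x:\norm{\mathbf\Pi^T(x,m)-p}_1<1/k\}\cap Y$ is relatively open in $Y$, so $U(p,k,M)\cap Y$ is open in $Y$, and the proof reduces to showing each such set is \emph{dense} in $Y$.

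For density, fix a cylinder $I_\omega$ determined by a finite word $\omega=\omega_1\cdots\omega_L$; I will exhibit a point of $U(p,k,M)$ inside the interior of $I_\omega$. Put $C:=1+\sup_m\sum_n|c_{m,n}|<\infty$, and pick a finite word $b$ whose digit-frequency vector $f$ satisfies $\norm{f-p}_1<\tfrac1{4kC}$ (take $b$ long, with digit $i$ occurring with proportion $\approx p_i$, and perturb finitely many entries so that $b$ is neither constantly $0$ nor constantly $N-1$). Let $x$ be the number with digit sequence $\omega\,bbbb\cdots$; then $x$ lies in the interior of $I_\omega$, $x\in Y$, and $\mathbf\Pi(x,n)\to f$, so there is $N_0$ with $\norm{\mathbf\Pi(x,n)-p}_1<\tfrac1{2kC}$ for all $n\geq N_0$. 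From the decomposition
\[
\mathbf\Pi^T(x,m)-p=\Bigl(\textstyle\sum_n c_{m,n}-1\Bigr)p+\sum_{n<N_0}c_{m,n}\bigl(\mathbf\Pi(x,n)-p\bigr)+\sum_{n\geq N_0}c_{m,n}\bigl(\mathbf\Pi(x,n)-p\bigr)
\]
the last sum has $\norm{\cdot}_1\leq\tfrac1{2kC}\sum_{n\geq N_0}|c_{m,n}|\leq\tfrac1{2k}$ for every $m$; the first summand tends to $0$ as $m\to\infty$ because $\sum_n c_{m,n}\to1$; and the middle summand, a fixed finite sum, tends to $0$ because $c_{m,n}\to0$ as $m\to\infty$ for each $n$. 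Hence $\norm{\mathbf\Pi^T(x,m)-p}_1<1/k$ for all large $m$, in particular for some $m\geq M$, so $x\in U(p,k,M)$.

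With density established, $R_T\cap Y=\bigcap_{j,k,M}(U(p_j,k,M)\cap Y)$ is a countable intersection of open dense subsets of the Polish space $Y$, hence residual in $Y$, hence $R_T$ is residual in $[0,1]$. The one genuinely non-routine point, relative to the Cesàro case treated in \cite{Ol3}, is the density step: one must absorb the uniform $\ell^1$-bound $C$ on the rows of $T$ by approximating the target $p$ to within $O(1/(kC))$ before periodically repeating a block, and then use \emph{both} regularity conditions — columnwise vanishing and row sums tending to $1$ — to annihilate the contribution of the initial, uncontrolled digits of $x$ to the $T$-average at large $m$. The remaining bookkeeping ($\ell^1$ versus $\ell^\infty$ estimates, the verification $x\in Y$, and the reduction from $[0,1]$ to $Y$) is straightforward.
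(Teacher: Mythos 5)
Your argument is correct, but it follows a genuinely different route from the paper. The paper never applies the Baire category theorem directly to sets defined through $\mathbf\Pi^T$: instead it first proves (Lemma \ref{family}) that for a suitable $\psi$ the set of $x$ whose \emph{raw} frequency sequence $(\mathbf\Pi(x,n))_n$ has the quantitative property $P_{h,\psi}$ (staying within $1/h$ of each rational $\mathbf q$ over whole multiplicative windows $[j,\psi^m(j)]$) is residual — there openness is immediate because membership is decided by finitely many digits — and then proves a transfer lemma (Lemma \ref{extreme}) showing that if the windows are long enough, as measured by a function $\psi_{h,T}$ built from the Silverman--Toeplitz data $(M_n)$, $(N_n)$, $K$ of $T$, the $T$-averages inherit the same property. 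You short-circuit the transfer step entirely by observing that for fixed $m$ the map $x\mapsto\mathbf\Pi^T(x,m)$ is already continuous on the co-countable set $Y$ (uniform convergence of $\sum_n c_{m,n}\mathbf\Pi(x,n)$ from the bounded row sums), so the sets $U(p_j,k,M)$ can be taken open and shown dense directly; your density construction (periodic tail with block frequency within $O(1/(kC))$ of the target, then all three regularity conditions to kill the drift, the initial segment, and the tail error) is sound, and the reduction from $Y$ to $[0,1]$ is the same meagre-complement observation the paper makes at the end. Your proof is shorter and self-contained for Theorem \ref{main} as stated; the paper's two-stage argument costs more bookkeeping but proves more, namely that $(\mathbf\Pi^T(x,m))_m$ itself satisfies the window property $P_{h,\psi_{h,T}}$ (long stretches of near-constancy near each target, not mere accumulation), and it isolates a family of residual sets $A_{h,\psi}$ defined purely in terms of the digits, which is what makes the subsequent corollaries and the intersection over countable families of transformations run smoothly.
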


\begin{remark}
Results similar in nature but in somewhat more general settings were obtained independently by Olsen and West in \cite{Ol4} and Aveni and Leonetti in \cite{ita}.
\end{remark}

The proof of Theorem \ref{main} is given in Section \ref{proof} below. Before proceeding we make two final remarks concerning our result .
Firstly, fix a countable family of regular linear transformations $\mathcal{T}=\{T_1, T_2,...\}$ (Examples include H\"older means, Ces\`aro averages,...). Notice that it clearly follows from Theorem $\ref{main}$ that the countable intersection $\mathcal{R}_\mathcal{T}:=\bigcap_{T\in\mathcal{T}}  R_{T}$ is also residual.

Additionally, also in terms of dimensions, the size of $R_T$ varies between \emph{very big} and \emph{very small} depending on the exact viewpoint. Below we write $\dim_H$ and $\dim_P$ for the Hausdorff dimension and the packing dimension, respectively. The reader is referred to \cite{Fa} for the definitions of the Hausdorff and packing dimensions. It follows from \cite{OW} that $ \dim_H R_T = 0$. On the other hand, using Theorem \ref{main} we can find the packing dimension of the set $R_T$. A result analogue to the following corollary appeared in \cite{Ol} with a slightly inaccurate proof. We postpone our argument until the next section. 
\begin{corollary}\label{dimen}
For any regular linear transformation $T$ the set $R_T$ has full packing dimension, that is
\[\dim_P\, R_T = 1\]
\end{corollary}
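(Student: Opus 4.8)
The plan is to exploit the standard fact that a set which is both residual (dense $G_\delta$) in a complete metric space and has the property that it is "large" near every point can have full packing dimension, even while being Hausdorff-null. More precisely, since packing dimension is countably stable and since $R_T \supseteq R_T \cap I$ for any interval $I$, it suffices to show that $\overline{\dim}_P(R_T \cap I) \geq 1$ for every dyadic subinterval $I$ of $[0,1]$; then $\dim_P R_T = 1$. To this end I would use the characterisation of packing dimension via the upper box dimension: $\dim_P E = \inf\{\sup_i \overline{\dim}_B E_i : E \subseteq \bigcup_i E_i\}$. So I must show that whenever $R_T = \bigcup_i E_i$ is a countable cover, some piece $E_i$ has upper box dimension $1$.

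The key step is a Baire-category argument internal to one of the pieces. Suppose for contradiction that $R_T = \bigcup_i E_i$ with $\overline{\dim}_B E_i \leq 1 - \delta$ for all $i$, some $\delta > 0$. Since $R_T$ is residual (Theorem \ref{main}), it is non-meagre, so by the Baire category theorem some $\overline{E_{i_0}}$ contains an interval $I$; in particular $R_T \cap I \subseteq \overline{E_{i_0}}$ and hence $\overline{\dim}_B(R_T \cap I) \leq \overline{\dim}_B \overline{E_{i_0}} = \overline{\dim}_B E_{i_0} \leq 1-\delta$. So it is enough to produce a single interval $I$ with $\overline{\dim}_B(R_T \cap I) = 1$, which already gives the contradiction and simultaneously proves the corollary after covering $[0,1]$ by finitely many such $I$. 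Concretely I would build inside $R_T$ a Cantor-type subset $C$: at level $k$ one prescribes a long block of free digits (contributing the full $\log N$ to the box-counting) followed by a correcting block chosen to steer $\mathbf\Pi^T(x,m)$ through a dense sequence of points of $\Delta_N$ — exactly the construction underlying Theorem \ref{main}, but with the ratio (free block length)$/$(total length) tending to $1$ along a subsequence of scales. One checks that any $x$ whose digit expansion follows this scheme lies in $R_T$ (the correcting blocks guarantee the required accumulation, and regularity of $T$ ensures the long free blocks do not destroy it, since their effect on the tail of the $T$-average is asymptotically negligible by the column-sum conditions). Counting $N$-adic intervals of length $N^{-n_k}$ needed to cover $C$, where $n_k$ is the scale at the end of a free block, gives $\gtrsim N^{(1-o(1))n_k}$, so $\overline{\dim}_B C = 1$, whence $\overline{\dim}_B(R_T\cap I)=1$.

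The main obstacle is the last point: showing that the long free-digit blocks, which are what force the box dimension up to $1$, are \emph{compatible} with membership in $R_T$. Inserting an arbitrarily long run of digits perturbs $\mathbf\Pi(x,n)$ and therefore its $T$-average, and a priori could prevent $\mathbf\Pi^T(x,m)$ from ever returning near a target vector. The resolution uses the regularity of $T$ precisely as in the proof of Theorem \ref{main}: because $\sum_n c_{m,n} \to 1$ and $\sup_m \sum_n |c_{m,n}| < \infty$ and each column tends to $0$, the contribution of any fixed finite block of early digits to $\Pi_i^T(x,m)$ vanishes as $m\to\infty$, so one schedules the correcting blocks far enough out (at scales $\gg n_k$) that the $T$-average has "forgotten" the free block before the next steering step. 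Making the bookkeeping uniform — one needs the free/correcting alternation to hit \emph{every} point of a fixed countable dense subset of $\Delta_N$ infinitely often while keeping the free-block fraction $\to 1$ — is the part that requires care, but it is a routine diagonalisation once the one-step steering lemma from the proof of Theorem \ref{main} is in hand. I would also remark that $\dim_H R_T = 0$ follows from \cite{OW} (any $x\in R_T$ is in particular non-normal in a strong sense), which together with $\dim_P R_T = 1$ shows $R_T$ is as dimensionally irregular as possible.
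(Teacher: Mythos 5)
Your opening Baire-category step is exactly the paper's argument, and in fact it already finishes the proof at the point where you note that some $\overline{E_{i_0}}$ contains an interval $I$: since upper box dimension is unchanged under closure and is monotone, $\overline{\dim}_B E_{i_0}=\overline{\dim}_B \overline{E_{i_0}}\geq \overline{\dim}_B I=1$, which directly contradicts $\overline{\dim}_B E_{i_0}\leq 1-\delta$. You even write down the identity $\overline{\dim}_B\overline{E_{i_0}}=\overline{\dim}_B E_{i_0}$ but only use it to get an \emph{upper} bound on $\overline{\dim}_B(R_T\cap I)$, and then set yourself the unnecessary task of producing a matching lower bound $\overline{\dim}_B(R_T\cap I)=1$. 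No lower bound on any subset of $R_T$ is needed: the whole point of the packing-dimension characterisation via countable covers is that the pieces $E_i$ may as well be replaced by their closures, and a closed set of upper box dimension less than $1$ cannot contain an interval, i.e.\ is nowhere dense --- so a cover of $R_T$ by such sets contradicts residuality. That is the paper's entire proof.

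The detour you propose instead --- building a Cantor-type subset of $R_T$ with long free-digit blocks whose proportion tends to $1$ --- is where your write-up has a genuine gap. Verifying that such points actually lie in $R_T$ is not routine: a free block occupying a fraction of the expansion tending to $1$ means the raw frequency vector $\mathbf\Pi(x,n)$ is dominated by the free digits at the relevant scales, and you would need a quantitative version of Lemma \ref{extreme} showing that the $T$-average can still be steered arbitrarily close to every $\mathbf q\in\mathbb D$ at infinitely many later times; the scheduling interacts with the $T$-dependent cutoffs $M_n, N_n$ and is exactly the delicate part of the proof of Theorem \ref{main}, not a consequence of it. You correctly flag this as ``the part that requires care'' but do not carry it out, so as written the contradiction is not established by your route. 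Fortunately none of this is needed: delete the Cantor construction, observe that $\overline{E_{i_0}}\supseteq I$ forces $\overline{\dim}_B E_{i_0}=1$, and you have the corollary.
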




\section{Proof of Theorem \ref{main}} \label{proof}

Throughout the proof, we will work with a subset of $[0,1]$, namely
\[
\mathbb{I}:= [0, 1]\setminus \left\{x\in [0, 1]\; :\; x \text{ has a terminating $N$-adic expansion}\right\}.
\]
For brevity, write \[
\mathbb{D}:=\mathbb{Q}^N\cap\Delta_N\setminus\{(1,0,...,0),(0,...,0,1)\}
\]
where we excluded two particular vectors for technical reasons, which become apparent in the proof of Claim \ref{dense}.

Let $\psi:\N\mapsto\N$ be any increasing function such that  $\psi(n)>n^2$ for all $n\in\N$, and write $\psi^{m}$ for the composition of $\psi$ with itself $m$ times. For $h\in\N$ we define the property $P_{h,\psi}$ as follows.
\begin{definition}
We say that a sequence $(\mathbf x_n)_n$ in $\R^N$ has property $P_{h,\psi}$ if for all $i\in\N,\;m\in\N$ and all $\mathbf q\in\mathbb{D}$, there exists $j\in\N$ satisfying the following:
\begin{itemize}
  \item $j\geq i$
  \item $\frac{j}{\psi(j)}\le \frac{1}{h}$
  \item if $j\le n\le \psi^m(j)$ then $\|\mathbf x_n-\mathbf q \|_1<\frac{1}{h}$.
\end{itemize}
\end{definition}


\noindent Our proof of Theorem \ref{main} will consist of three lemmas:
\begin{enumerate}
  \item First we will prove that for any $h\in\N$ and any increasing $\psi:\N\mapsto\N$ such that  $\psi(n)>n^2$ for all $n\in\N$, the following set is residual:
  \begin{align}\label{a}
    A_{h,\psi}:=\left\{ x \in\mathbb{I}\; :\;(\mathbf\Pi(x, n))_n \text{ has property $P_{h,\psi}$}\right\}.
  \end{align} 
\item Then we will show that if $T$ is any regular linear transformation and $h\in\N$ then there exists $\psi_{h,T}:\N\mapsto\N$ such that if $x\in A_{6Mh,\psi_{h,T}}$, with $M$ depending on $T$, then  $(\mathbf\Pi^T(x,m))_{m}$ has property $P_{h,\psi_{h,T}}$.
\item Finally, setting $A_T:=\bigcap_{h\in\N} A_{6Mh,\psi_{h,T}}$ we will show that \[ A_{T}\subset R_T\] (recall that $R_T$ is defined in Theorem \ref{main}).
\end{enumerate}

\begin{lemma}\label{family}
For all  $h\in\N$ and all increasing $\psi:\N\mapsto\N$ such that  $\psi(n)>n^2$ for all $n\in\N$, the set $A_{h,\psi}$ is residual (recall that $A_{h,\psi}$ is defined in (\ref{a})).
\end{lemma}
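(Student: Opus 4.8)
The plan is to realize $A_{h,\psi}$ as a countable intersection of open dense subsets of the Polish space $\mathbb{I}$ (with the metric inherited from $[0,1]$), so that residuality follows from the Baire category theorem. First I would rewrite the property $P_{h,\psi}$ for the concrete sequence $\mathbf x_n = \mathbf\Pi(x,n)$ in a way that makes the quantifier structure transparent. Since $\mathbb{D}$ is countable, enumerate it as $\mathbf q^{(1)},\mathbf q^{(2)},\dots$; then
\[
A_{h,\psi} \;=\; \bigcap_{i\in\N}\ \bigcap_{m\in\N}\ \bigcap_{k\in\N}\ U_{i,m,k},
\qquad
U_{i,m,k}:=\Bigl\{x\in\mathbb{I}\;:\;\exists\, j\ge i,\ \tfrac{j}{\psi(j)}\le\tfrac1h,\ \text{and }\|\mathbf\Pi(x,n)-\mathbf q^{(k)}\|_1<\tfrac1h\ \forall n\in[j,\psi^m(j)]\Bigr\}.
\]
Each $U_{i,m,k}$ is a countable union over admissible $j$ of the set of $x$ for which $\|\mathbf\Pi(x,n)-\mathbf q^{(k)}\|_1<1/h$ holds simultaneously for all $n$ in the finite window $[j,\psi^m(j)]$; since each of these conditions depends only on finitely many digits $d_1(x),\dots,d_{\psi^m(j)}(x)$, that set is a finite union of $N$-adic cylinders (intersected with $\mathbb{I}$), hence open in $\mathbb{I}$. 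Therefore each $U_{i,m,k}$ is open, and it remains to prove density.

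For density, fix a basic cylinder: a point is prescribed to begin with digits $w=w_1\cdots w_L$. I must find $x$ extending $w$ that lies in $U_{i,m,k}$, i.e.\ for which some admissible window $[j,\psi^m(j)]$ has all frequency vectors within $1/h$ of $\mathbf q=\mathbf q^{(k)}$. The idea is to choose $j$ very large — large enough that $j\ge i$, that $j/\psi(j)\le 1/h$ (possible since $\psi(n)/n\to\infty$), and, crucially, large enough that the influence of the $L$ prescribed digits is negligible and that $j$ dwarfs $\psi^{m-1}(j)$'s predecessor... more precisely, large enough that after position $j$ the block structure below takes over. Because $\mathbf q\in\mathbb{Q}^N$, write $\mathbf q=(a_0/b,\dots,a_{N-1}/b)$ with $\sum a_i=b$; then on the interval $(j,\psi^m(j)]$ I fill in digits periodically with period $b$, placing $a_i$ copies of digit $i$ per period. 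A short computation shows that for $n$ ranging over $[j,\psi^m(j)]$ the frequency vector $\mathbf\Pi(x,n)$ is a convex combination of $\mathbf\Pi(x,j)$ (which has total mass $1$ but weight $\le j/n\le j/\psi(j)\le 1/h$ once $n\ge\psi(j)$) and a vector that is within $O(N/b)$ of $\mathbf q$ coming from the periodic block; choosing $b$ large in the rational approximation of $\mathbf q$ inside $\mathbb{D}$ — here is where excluding the two extreme vertices helps, so that $\mathbf q$ has a representation with all $a_i\ge1$ and $b$ as large as we like — forces $\|\mathbf\Pi(x,n)-\mathbf q\|_1<1/h$ throughout the window. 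One does have to treat the initial sub-window $j\le n<\psi(j)$ separately, but there $n\ge j$ is already huge, and taking $j$ large makes $\mathbf\Pi(x,n)$ track $\mathbf q$ there too, after absorbing the bounded contribution of the first $L$ digits and of $\mathbf\Pi(x,j)$. Any completion of $x$ beyond position $\psi^m(j)$ is allowed.

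The main obstacle is the uniform control of $\|\mathbf\Pi(x,n)-\mathbf q\|_1$ over the \emph{entire} window $[j,\psi^m(j)]$ at once, rather than at a single scale: the left endpoint $n=j$ and the right endpoint $n=\psi^m(j)$ differ by iterated applications of $\psi$, so the "old" mass $\mathbf\Pi(x,j)$ is tiny at the right but only moderately small at the left. This is exactly why the hypothesis $\psi(n)>n^2$ (equivalently $j/\psi(j)\to 0$ fast) and the admissibility condition $j/\psi(j)\le 1/h$ are imposed: they guarantee that already from $n=\psi(j)$ onward the stale contribution is below $1/h$, while on $[j,\psi(j))$ we can instead rely on $j$ itself being enormous so that the periodic block filled in on $(j,n]$ has had enough room — we simply demand in addition that the first prescribed digits end well before position $j$ and pad $(L,j]$ with the same periodic pattern, so that in fact $\mathbf\Pi(x,n)$ is close to $\mathbf q$ for \emph{all} $n\ge$ some threshold $\le j$. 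With these choices the verification is a routine triangle-inequality estimate, and density — hence, via Baire, residuality of $A_{h,\psi}$ — follows.
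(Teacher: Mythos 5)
Your proposal follows essentially the same route as the paper: write $A_{h,\psi}$ as a countable intersection of the sets $U_{i,m,k}$, observe that each is a union of cylinders depending on finitely many digits (hence open), and prove density by appending a periodic word realizing the rational vector $\mathbf q$ and taking the starting index $j$ large enough to absorb the prescribed prefix. Two small slips that do not affect the final argument: the partial-period error of the block on $(j,n]$ is $O\bigl(b/(n-j)\bigr)$, not $O(N/b)$, so it is controlled by taking $n\geq j$ large with $b$ fixed by $\mathbf q$ (taking $b$ large would make it worse); and the two vertices are excluded from $\mathbb D$ to guarantee the constructed point has a non-terminating expansion and hence lies in $\mathbb I$, not to ensure all numerators of $\mathbf q$ are positive.
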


\begin{proof}
For fixed $i\in\N,\; m\in\N$ and $\mathbf q\in\mathbb D$, we define property $P_{h,\psi,i,m,\mathbf q}$, as follows. We say that a sequence $(\mathbf x_n)_n$ has property $P_{h,\psi,i,m,\mathbf q}$ if there exists $j\in\N$ satisfying
\begin{enumerate}
  \item $j\geq i$
  \item $\frac{j}{\psi(j)}<\frac{1}{h}$
  \item if $j<n<\psi^m(j)$ then $\|\mathbf x_n-\mathbf q\|_1<\frac{1}{h}$.
\end{enumerate}

Let $G_{h,\psi,i,m,\mathbf q}:=\left\{ x\in\mathbb I\; :\; (\mathbf\Pi(x, n))_n \text{ has property $P_{h,\psi,i,m,\mathbf q}$}\right\}$. Clearly,
\[
\bigcap_{i\in\N}\bigcap_{m\in\N}\bigcap_{\mathbf q\in\mathbb D}\,G_{h,\psi,i,m,\mathbf q}=A_{h,\psi}
\]

\begin{claim}\label{open}
$G_{h,\psi,i,m,\mathbf q}$ is open.
\end{claim}
\begin{proof}
Let $x\in G_{h,\psi,i,m,\mathbf q}$. Then, there exists a positive integer $j$ such that $j\geq i$, $\frac{j}{\psi(j)}< \frac{1}{h}$, and if $j < n < \psi^m(j )$, then $\left\|\mathbf\Pi(x,n) -\mathbf q \right\|_1<\frac{1}{h}$.
We now choose $\delta$ to equal $\frac{1}{N^{a}}$ where $a>\psi^{m+1}(j)+1$ and since $x\in\mathbb I$ we can also insist that $a\in\N$ is such that the $(a-1)$ and $(a-2)$-nd digits of $x$ are not both either $0$ or $N-1$. . Then all $y\in B(x,\delta)$ have their first $\psi^m(j)$ digits the same as $x$, and so $B(x,\delta)\subset G_{\psi,h,i,m,\mathbf q}$. This completes the proof of Claim \ref{open}.
\end{proof}

\begin{claim}\label{dense}
$G_{h,\psi,i,m,\mathbf q}$ is dense.
\end{claim} 

\begin{proof}
Let $x\in\mathbb{I}$ and $\delta>0$. We must now find $y\in B(x,\delta)\cap G_{h,\psi,i,m,\mathbf q}$. Let $t\in\N$ be such that $\frac{1}{N^t}<\delta$. We can clearly choose a positive integer $s\in\N$ and $z_1,...,z_s\,\in\{0,1,...,N-1\}$ such that if $z =\frac{z_1}{N}+\frac{z_2}{N^2}+...+\frac{z_s}{N^s}$ then $\mathbf \Pi(z, s) = \mathbf q$. Let 
\[
y = \frac{d_1(x)}{N}+...+\frac{d_t(x)}{N^t}+\sum_{d=0}^\infty\left( \frac{z_1}{N^{t+d s+1}}+\frac{z_2}{N^{t+d s+2}}+...+\frac{z_s}{N^{t+d s+s}} \right)
.\]
Then $y\in B(x,\delta)$ (as $y$ has its first $t$ digits the same as $x$). 

Next we show that $y\in G_{h,\psi,i,m,\mathbf q}$. All $z_i$'s cannot be $0$ or $N-1$, because we excluded the vectors $(1, 0,...,0),\,(0,...,0,1)$. Therefore, $y$ has a non-terminating $N$-adic expansion. Choose $j$ big enough such that
\[
\frac{j}{\psi(j)}<\frac{1}{h}\;\;\;\text{  and  }\;\;\; j\geq\max\left\{ i,t,hN\max_{0\leq\ell\leq N-1}\left\{ {N_\ell (z,s)\left(2+\frac{t}{s}\right)+N_\ell (y,t)} \right\} \right\} 
\]
where $N_\ell (x, n) = |\{1\leq k\leq n \; : \; d_k(x) = \ell \}|$. Fix a positive integer $n$ with $j < n < \psi^{m}(j)$ and observe that we can find integers $r$ and $b_\ell$ with $0\leq r < s$ and $0\leq b_\ell < N_\ell(z, s)$, such that $n = t + \left[\frac{n-t}{s}\right] s+ r$, and $N_\ell(y,n) = N_\ell(y, t) +\left[\frac{n-t}{s}\right]N_\ell(z,s)+b_\ell$. We now have,
\begin{align*}
\left\| \mathbf \Pi(y,n)-\mathbf q \right\|_1&=\sum_{\ell=0}^{N-1}  \left|  \frac{N_\ell(y,n)}{n}-\frac{n\frac{N_\ell(z,s)}{s}}{n}  \right|    \\
&= \sum_{\ell=0}^{N-1} \left|  \frac{N_\ell(y,t)+[\frac{n-t}{s}]N_\ell(z,s)+b_\ell}{n}-\frac{(t+[\frac{n-t}{s}]s+r)\frac{N_\ell(z,s)}{s}}{n}  \right|      \\
&\leq \sum_{\ell=0}^{N-1}  \left(  \left|  \frac{[\frac{n-t}{s}]N_\ell(z,s)-\frac{N_\ell(z,s)}{s}(t+r)-N_\ell(z,s)[
\frac{n-t}{s}]}{n}  \right| +\frac{N_\ell(y,t)+b_\ell}{n}  \right) 
       \\
&<  \sum_{\ell=0}^{N-1} \left(  \frac{\frac{N_\ell(z,s)}{s}(t+r)+N_\ell(y,t)+N_\ell(z,s)}{n} \right)     \\
&< \sum_{\ell=0}^{N-1}  \left( \frac{N_\ell(z,s)(2+\frac{t}{s})+N_\ell (y,t)}{j}  \right) \leq \frac{1}{h}.   
\end{align*}
This shows that $y \in G_{h,\psi,i,m,\mathbf q}$ and completes the proof of Claim \ref{dense}.
\end{proof}

It follows from Claim \ref{open} and Claim \ref{dense}, that $A_{h,\psi}$ is the countable intersection of open and dense sets, and hence residual. This completes the proof of Lemma \ref{family}.
\end{proof}
\begin{lemma}\label{extreme}
For any regular linear transformation $T$ and any $h\in\N$ we can:
\begin{enumerate}
    \item construct an increasing function $\psi_{h,T}:\N\mapsto\N$ such that $\psi(n)>n^2$ for all $n\in\N$ and,
    \item show that for some $M$ depending on $T$, we have that for each\[
    x\in A_{6Mh,\psi_{h,T}}\text{ the sequence $\left(\mathbf\Pi^T(x,m)\right)_m$ has property $P_{h,\psi_{h,T}}$.}
    \]
\end{enumerate}

\end{lemma}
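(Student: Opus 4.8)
The plan is to derive property $P_{h,\psi_{h,T}}$ for the averaged sequence $\bigl(\mathbf\Pi^T(x,\nu)\bigr)_\nu$ from the much finer property $P_{6Mh,\psi_{h,T}}$ of the raw sequence $\bigl(\mathbf\Pi(x,n)\bigr)_n$, which holds because $x\in A_{6Mh,\psi_{h,T}}$. The only facts about $T$ I will use are the three Silverman--Toeplitz conditions equivalent to regularity (see, e.g., \cite{hard}): for a regular $T=[c_{m,n}]$ one has $M_0:=\sup_m\sum_{n\ge1}|c_{m,n}|<\infty$, $c_{m,n}\to0$ as $m\to\infty$ for each fixed $n$, and $\sum_{n\ge1}c_{m,n}\to1$ as $m\to\infty$. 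Fix $h\in\N$ and pick an integer $M\ge M_0$; this is the $M$ in the statement. The underlying mechanism is simple: if the raw frequencies $\mathbf\Pi(x,n)$ stay within $\tfrac1{6Mh}$ of some $\mathbf q\in\mathbb{D}$ throughout a very long block $[j',\psi^{m'}(j')]$ of indices, then every averaging index $\nu$ whose weight row $(c_{\nu,n})_n$ places almost all of its mass on that block must satisfy $\norm{\mathbf\Pi^T(x,\nu)-\mathbf q}_1<\tfrac1h$; the three Toeplitz conditions are what make ``almost all of its mass'' quantitative over a suitable range of $\nu$.

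To set this up I fix, depending only on $h$ and $T$: an increasing $f:\N\to\N$ with $f(J)\ge J$ such that $\sum_{n\le J}|c_{m,n}|<\tfrac1{12h}$ whenever $m\ge f(J)$ (possible since each of the finitely many terms $c_{m,n}$, $n\le J$, tends to $0$ as $m\to\infty$); an increasing $g:\N\to\N$ with $g(L)\ge L$ such that $\sum_{n>g(L)}|c_{m,n}|<\tfrac1{12h}$ whenever $m\le L$ (possible since every row is absolutely summable, being bounded in $\ell^1$ by $M_0$, and only the finitely many rows $m\le L$ are constrained); and an integer $m_2$ with $\bigl|\sum_n c_{m,n}-1\bigr|<\tfrac1{6h}$ for all $m\ge m_2$. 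I then \emph{define} $\psi_{h,T}(n):=\max\{f(n),\,g(n),\,n^2+1\}$; this is increasing, satisfies $\psi_{h,T}(n)>n^2$ as required in (1), and dominates both $f$ and $g$ pointwise. Abbreviate $\psi:=\psi_{h,T}$.

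Now let $x\in A_{6Mh,\psi}$ and let a triple $(i,m,\mathbf q)\in\N\times\N\times\mathbb{D}$ be given; I must produce $j\ge i$ with $\tfrac{j}{\psi(j)}\le\tfrac1h$ such that $\norm{\mathbf\Pi^T(x,\nu)-\mathbf q}_1<\tfrac1h$ for all $\nu$ with $j\le\nu\le\psi^m(j)$. Put $i':=\max\{i,h,m_2\}$ and $m':=m+2$, and apply property $P_{6Mh,\psi}$ of $\bigl(\mathbf\Pi(x,n)\bigr)_n$ to the triple $(i',m',\mathbf q)$ to obtain $j'\ge i'$ with $\norm{\mathbf\Pi(x,n)-\mathbf q}_1<\tfrac1{6Mh}$ for all $j'\le n\le\psi^{m'}(j')$. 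Set $j:=f(j')$; since $f(J)\ge J$ and $j'\ge i'\ge\max\{i,h\}$ we get $j\ge i$ and $j\ge h$, hence $\tfrac{j}{\psi(j)}<\tfrac1j\le\tfrac1h$ (as $\psi(j)>j^2$). For $\nu$ with $j\le\nu\le\psi^m(j)$, write $\mathbf\Pi^T(x,\nu)-\mathbf q=\sum_n c_{\nu,n}\bigl(\mathbf\Pi(x,n)-\mathbf q\bigr)+\bigl(\sum_n c_{\nu,n}-1\bigr)\mathbf q$ and split the first sum into a head ($n<j'$), a bulk ($j'\le n\le\psi^{m'}(j')$) and a tail ($n>\psi^{m'}(j')$). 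Using $\norm{\mathbf u-\mathbf v}_1\le2$ for $\mathbf u,\mathbf v\in\Delta_N$ and $\sum_n|c_{\nu,n}|\le M$, each of these four contributions to $\norm{\mathbf\Pi^T(x,\nu)-\mathbf q}_1$ is at most $\tfrac1{6h}$: the head because $\nu\ge j=f(j')$ forces $\sum_{n\le j'}|c_{\nu,n}|<\tfrac1{12h}$; the bulk because every factor there is $<\tfrac1{6Mh}$; the defect term because $\nu\ge j\ge m_2$; and the tail because $j=f(j')\le\psi(j')$ gives $\psi^m(j)\le\psi^{m+1}(j')$, whence $g(\psi^m(j))\le g(\psi^{m+1}(j'))\le\psi(\psi^{m+1}(j'))=\psi^{m+2}(j')=\psi^{m'}(j')$, so for $\nu\le\psi^m(j)$ the defining property of $g$ yields $\sum_{n>\psi^{m'}(j')}|c_{\nu,n}|\le\sum_{n>g(\psi^m(j))}|c_{\nu,n}|<\tfrac1{12h}$. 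Summing the four bounds gives $\norm{\mathbf\Pi^T(x,\nu)-\mathbf q}_1<4\cdot\tfrac1{6h}=\tfrac2{3h}<\tfrac1h$, so $j$ witnesses $P_{h,\psi}$ at $(i,m,\mathbf q)$ and $\bigl(\mathbf\Pi^T(x,\nu)\bigr)_\nu$ has property $P_{h,\psi_{h,T}}$.

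The genuine difficulty---and the reason for the shape of the statement---is the apparent circularity: $\psi_{h,T}$ occurs both in the hypothesis $x\in A_{6Mh,\psi_{h,T}}$ and in the conclusion, and the window $[j,\psi^m(j)]$ of averaging indices on which $\mathbf\Pi^T(x,\cdot)$ must be close to $\mathbf q$ has to be pinned down before the good block $[j',\psi^{m'}(j')]$ of raw indices is known, while $j$ itself depends on $j'$. This is resolved by defining $\psi_{h,T}$ once and for all so that it dominates the $T$-dependent thresholds $f$ and $g$, which decouples it from the later choices, and by building in the slack $m'=m+2$ in the composition depth: this is precisely what forces the long good block guaranteed by $x\in A_{6Mh,\psi_{h,T}}$ to reach beyond the level $g(\psi^m(j))$ past which the weights of every averaging row of index $\le\psi^m(j)$ have already decayed. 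The factor $6M$ in the hypothesis is then forced by the four-way split: the ``$M$'' absorbs averaging a vector error of size $\tfrac1{6Mh}$ against a row of total mass $\le M$, while the ``$6$'' distributes the target error $\tfrac1h$ among head, bulk, tail and normalisation, each of size $\tfrac1{6h}$. The only external input is the Silverman--Toeplitz theorem, which provides the finiteness of $M$ and the two limiting conditions on the rows of $T$.
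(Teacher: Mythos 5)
Your proof is correct and follows essentially the same route as the paper's: both invoke the Silverman--Toeplitz characterisation of regularity, build $\psi_{h,T}$ to dominate $T$-dependent head/tail thresholds, apply the raw property $P_{6Mh,\psi}$ at composition depth $m+2$, and split the error into head, bulk, tail and normalisation-defect pieces each bounded by a fraction of $\tfrac1h$. The only differences are cosmetic (you package the column decay as a single threshold $f(J)$ for $\sum_{n\le J}|c_{m,n}|$ rather than the paper's geometric per-column bounds, and you take $j=f(j')$ rather than $j=\psi(j')$), so no further comment is needed.
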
 


Before proceeding to prove Lemma \ref{extreme}, let us first recall a very useful characterisation result for \emph{regular} linear transformations. Given a linear transformation $T$ corresponding to an $\N\times\N$ real valued matrix $\left[c_{m,n}\right]$ and a sequence $(s_n)_n$ we are interested in the behaviour of its averaged version, that is the sequence whose $m$-th term is given by $t_m=\sum_{n\geq1} c_{m,n} s_n$. 

\begin{proposition}[\cite{hard}]\label{regular}
A linear transformation $T=\left[c_{m,n}\right]_{\mathbb{N}\times\mathbb{N}}$ is regular if and only if the following conditions are satisfied:
\begin{enumerate}
\item there exists $M\in\N$ such that for all $m\in\mathbb{N}$ we have that
$\sum_{n=1}^{\infty}|c_{m,n}|<M$,
\item for all $n\in\mathbb{N}$, $\lim_{m\to\infty}c_{m,n}=0$ and
\item we have that $\lim_{m\rightarrow\infty}\sum_{n=1}^{\infty}c_{m,n}= 1$.
\end{enumerate}
\end{proposition}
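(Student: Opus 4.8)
The statement is the classical Silverman--Toeplitz (Toeplitz) characterisation, so the plan is to prove the two implications separately, working throughout with the family of linear functionals $L_m(s):=\sum_{n\ge1}c_{m,n}s_n$. The defining hypothesis that $T$ is a linear transformation guarantees that each series $L_m(s)$ converges, and a short preliminary observation shows that this forces every row $(c_{m,n})_n$ to lie in $\ell^1$: were $\sum_n|c_{m,n}|=\infty$ for some $m$, one could manufacture a null sequence $s$ along which the partial sums of $\sum_n c_{m,n}s_n$ oscillate unboundedly (a gliding-hump choice of the $s_n$), contradicting convergence. Hence each $L_m$ is a bounded functional on the Banach space $c_0$ of null sequences, with $\|L_m\|=\sum_n|c_{m,n}|$, and it extends to a bounded functional on the space $c$ of convergent sequences.

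For the sufficiency direction (conditions (1)--(3) imply regularity), I would take a convergent sequence $s_n\to l$, write $s_n=l+\e_n$ with $\e_n\to0$, and decompose $t_m=l\sum_n c_{m,n}+\sum_n c_{m,n}\e_n$. Condition (3) handles the first term, since $l\sum_n c_{m,n}\to l$. For the second term, given $\eta>0$ I would pick $n_0$ with $|\e_n|<\eta$ for $n>n_0$ and split the sum at $n_0$: the tail is bounded by $\eta\sum_n|c_{m,n}|\le\eta M$ using (1), while the head is a finite sum whose terms each tend to $0$ as $m\to\infty$ by (2) (the $\e_n$ being bounded, as a convergent sequence). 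Letting $m\to\infty$ and then $\eta\to0$ yields $t_m\to l$, so $T$ is regular.

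For the necessity direction (regularity implies (1)--(3)), I would feed $T$ three families of test sequences. Applying regularity to the constant sequence $s\equiv1$ (whose limit is $1$) gives $\sum_n c_{m,n}=t_m\to1$, which is condition (3). Applying it to each standard unit sequence $e_k$ (equal to $1$ in position $k$ and $0$ elsewhere, limit $0$) gives $c_{m,k}=t_m\to0$ as $m\to\infty$ for each fixed $k$, which is condition (2). The remaining condition (1) is the uniform bound $\sup_m\sum_n|c_{m,n}|<\infty$, and this is the only genuinely delicate point.

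I expect condition (1) to be the main obstacle, since it is a uniform statement that cannot be extracted by testing one sequence at a time. The clean route is the uniform boundedness principle: regularity forces $(L_m(s))_m$ to converge, hence to be bounded, for every $s\in c_0$, so the family $\{L_m\}$ is pointwise bounded on the Banach space $c_0$; Banach--Steinhaus then gives $\sup_m\|L_m\|=\sup_m\sum_n|c_{m,n}|<\infty$, and any natural number exceeding this supremum serves as the required $M$. Should a self-contained argument be preferred, I would instead run a gliding-hump construction: assuming $\sum_n|c_{m,n}|$ is unbounded in $m$, I would inductively select a rapidly increasing sequence of rows $m_k$ together with disjoint index blocks on which the $\ell^1$-mass of row $m_k$ concentrates, and then build a single null sequence $s$ (constant of small size and appropriate sign on each block) along which $t_{m_k}\not\to0$, contradicting regularity. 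Either way conditions (1)--(3) follow, completing the characterisation.
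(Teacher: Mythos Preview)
The paper does not supply its own proof of this proposition: it is quoted from Hardy's \emph{Divergent Series} and used as a black box in the proof of Lemma~\ref{extreme}. Your proposal is the standard Silverman--Toeplitz argument and is correct; in particular your use of the test sequences $s\equiv 1$ and $e_k$ for (3) and (2), and either Banach--Steinhaus on $c_0$ or a gliding-hump construction for the uniform bound (1), is exactly the classical route.
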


\begin{proof}[Proof of Lemma \ref{extreme}]
Fix $h\in\N$ and $T=[c_{m,n}]$ a regular linear transformation. Firstly, we construct $\psi_{h,T}:\N\mapsto\N$. To do that recall that from Proposition \ref{regular} we can associate to $T$ a number $K\in\N$ and sequences of natural numbers $(M_k)_k$ and $(N_k)_k$ such that:
 \begin{enumerate}
     \item for each $m\in\N$ we have $\sum_{n\geq M_m} |c_{m,n}|\leq\frac{1}{12h}$,
     \item for each $n\in\N$ we have $|c_{m,n}|<\frac{1}{2^{n+10}h}$ whenever $m>N_n$,
     \item for $m>K$ we have $\left|1-\sum_{n\geq1}c_{m,n}    \right|<\frac{1}{2h}.$
 \end{enumerate}
Recall also $M\in\N$ the uniform bound of the absolute row sums of $T$. Having fixed these constants depending on the linear transformation $T$ and the natural number $h$, we proceed by defining our function $\psi$ by $\psi(0)=K+2$ and recursively for $n\geq1$:
$$
\psi_{h,T}(n):=M_n+N_n+n^2+\psi_{h,T}(n-1).
$$
This function $\psi=\psi_{h,T}$ is increasing and crucially $\psi(n)>\max\{n^2,M_n,N_n,K\}+1$ for all $n\geq0$. 

Then, fix $i\in\N$, $m\in\N$ and $\mathbf q\in\mathbb D$. For $x\in A_{6Mh,\psi}$, the sequence $\left(\mathbf\Pi(x,n)\right)_n$ satisfies property $P_{6Mh,\psi}$. 
We can therefore find $j'\geq i$, with $\frac{j'}{\psi(j')}<\frac{1}{6Mh}$ and such that if $j'<n<\psi^{m+2}(j')$, then \begin{align}\label{step}
  \left\| \mathbf\Pi(x,n)-\mathbf q \right\|_1<\frac{1}{6Mh}.
\end{align}
Set $j=\psi(j')$. Then, clearly $j\geq i$ and $\frac{j}{\psi(j)}=\frac{\psi(j')}{\psi(\psi(j'))}<\frac{1}{\psi(j')}<\frac{j'}{\psi(j')}<\frac{1}{h}$. For all $j<r<\psi^m(j)$, that is $\psi(j')<r<\psi^{m+1}(j')$, we have
\[
\left\| \mathbf \Pi^T(x,r)-\mathbf q \right\|_1\leq\left\| \sum_{n\geq1} c_{r,n} \left(\mathbf\Pi(x,n)-\mathbf q\right)  \right\|_1+\left\| \mathbf q (1-\sum_{n\geq1}c_{r,n}) \right\|_1\leq \sum_{n\geq1} |c_{r,n}| \left\| \mathbf\Pi(x,n)-\mathbf q \right\|_1+\frac{1}{2h}
\]
since $r>\psi(j')>K$ and $\norm{\mathbf q}_1=1$. Now we proceed by splitting our summation into three parts which we bound separately.
\begin{align*}
 \sum_{n\geq 1} |c_{r,n}|\left\| \mathbf\Pi(x,n)-\mathbf q \right\|_1&=\sum_{n=1}^{j'} |c_{r,n}|\left\| \mathbf\Pi(x,n)-\mathbf q  \right\|_1\\
 &+ \sum_{n\geq j'+1}^{\psi^{m+2}(j')-1} |c_{r,n}|\left\| \mathbf\Pi(x,n)-\mathbf q \right\|_1\\
&+ \sum_{n\geq \psi^{m+2}(j')} |c_{r,n}|\left\| \mathbf\Pi(x,n)-\mathbf q \right\|_1
\end{align*}
Firstly, using the fact that $r>\psi(j')$ we get that $r>\max\{N_1,...,N_{j'}\}$ and hence 
\begin{align*}
  \sum_{n\leq j'} |c_{r,n}|\left\| \mathbf\Pi(x,n)-\mathbf q   \right\|_1\leq 2\sum_{n\leq j'} |c_{r,n}|\leq \sum_{n\leq j'} \frac{1}{2^{n+9}h}\leq \frac{1}{6h}.
\end{align*}
Then using (\ref{step}) we can bound the second summand by
\[
\sum_{n\geq j'+1}^{\psi^{m+2}(j')-1} |c_{r,n}|\left\|  \mathbf\Pi(x,n)-\mathbf q   \right\|_1\leq \frac{1}{6 Mh} \sum_{n\geq 1} |c_{r,n}|\leq \frac{1}{6h}.
\]
Finally, for $n\geq\psi^{m+2}(j')=\psi(\psi^{m+1}(j'))>\psi(r)$ we have that $n>M_r$ hence
\[
\sum_{n\geq \psi^{m+2}(j')} |c_{r,n}|\left\| \mathbf\Pi(x,n)-\mathbf q \right\|_1\leq 2\sum_{n\geq M_r} |c_{r,n}|\leq \frac{1}{6h}.
\]
Putting all these together we get that for all $j<r<\psi^m(j)$  \[
\left\| \mathbf \Pi^T(x,r)-\mathbf q \right\|_1\leq \frac{1}{h}
\]
The choices of $i,m\in\N$ and $\mathbf q\in\mathbb D$ were arbitrary hence the sequence $\left(\mathbf\Pi^T(x,m)\right)_m$ has property $P_{h,\psi_{h,T}}$. This completes the proof of Lemma \ref{extreme}.

\end{proof}


\begin{lemma}\label{subset} Let $T=[c_{m,n}]$ be a regular linear transformation and recall the set $A_T:=\bigcap_{h\in\N} A_{6Mh,\psi_{h,T}}$. Then
$$A_T\subset R_T:=\left \{ x\in[0, 1]\; :\; \text{The sequence $\left(\mathbf\Pi^T(x, m)\right)_m$ accumulates at every point of $\Delta_N$} \right\}.$$
\end{lemma}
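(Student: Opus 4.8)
The plan is to deduce the inclusion directly from Lemma \ref{extreme} together with a soft density-and-diagonalisation argument; essentially all the real work has already been carried out in Lemmas \ref{family} and \ref{extreme}. First I would fix $x\in A_T=\bigcap_{h\in\N}A_{6Mh,\psi_{h,T}}$. By Lemma \ref{extreme}, for every $h\in\N$ the membership $x\in A_{6Mh,\psi_{h,T}}$ forces the $T$-averaged sequence $\left(\mathbf\Pi^T(x,m)\right)_m$ to have property $P_{h,\psi_{h,T}}$. The only consequence of property $P_{h,\psi}$ that the argument needs is the weak one obtained by taking $m=1$ and discarding the inequality $\tfrac{j}{\psi(j)}\le\tfrac1h$: for every $i\in\N$ and every $\mathbf q\in\mathbb D$ there is an index $j\ge i$ with $\left\|\mathbf\Pi^T(x,j)-\mathbf q\right\|_1<\tfrac1h$.

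Next I would record that $\mathbb D$ is dense in $\Delta_N$ with respect to $\|\cdot\|_1$. Indeed $\mathbb Q^N\cap\Delta_N$ is dense in the simplex $\Delta_N$, and since $N\ge2$ the simplex $\Delta_N$ has no isolated points, so removing the two vertices $(1,0,\dots,0)$ and $(0,\dots,0,1)$ does not destroy density; hence $\mathbb D$ is dense in $\Delta_N$. (This is exactly why those two vectors could safely be excluded in the definition of $\mathbb D$.)

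The core step is then a recursive construction of a convergent subsequence. Fix an arbitrary $p\in\Delta_N$; I will produce indices $j_1<j_2<\cdots$ with $\mathbf\Pi^T(x,j_h)\to p$. Set $j_0:=0$, and having chosen $j_1,\dots,j_{h-1}$, use density of $\mathbb D$ to pick $\mathbf q_h\in\mathbb D$ with $\left\|\mathbf q_h-p\right\|_1<\tfrac1h$, then apply the weak form of property $P_{h,\psi_{h,T}}$ to the sequence $\left(\mathbf\Pi^T(x,m)\right)_m$ with $i:=j_{h-1}+1$ and $\mathbf q:=\mathbf q_h$ to obtain $j_h\ge j_{h-1}+1$ with $\left\|\mathbf\Pi^T(x,j_h)-\mathbf q_h\right\|_1<\tfrac1h$. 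The triangle inequality gives $\left\|\mathbf\Pi^T(x,j_h)-p\right\|_1<\tfrac2h$, and the $j_h$ are strictly increasing by construction, so $\left(\mathbf\Pi^T(x,j_h)\right)_h$ is a genuine subsequence of $\left(\mathbf\Pi^T(x,m)\right)_m$ converging to $p$. Hence $p$ is an accumulation point of $\left(\mathbf\Pi^T(x,m)\right)_m$. Since $p\in\Delta_N$ was arbitrary and $A_T\subset\mathbb I\subset[0,1]$, this yields $x\in R_T$, proving $A_T\subset R_T$.

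There is no genuine obstacle in this last lemma: the only point requiring care is ensuring the indices $j_h$ are \emph{strictly} increasing, so that one really obtains a subsequence and not the same term repeated, and this is arranged simply by feeding the large value $i=j_{h-1}+1$ into property $P_{h,\psi_{h,T}}$ at each stage. Everything of substance — the residuality in Lemma \ref{family} and the transfer of the divergence property from $\left(\mathbf\Pi(x,n)\right)_n$ to $\left(\mathbf\Pi^T(x,m)\right)_m$ in Lemma \ref{extreme} — has already been done.
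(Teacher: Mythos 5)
Your proposal is correct and follows essentially the same route as the paper: approximate an arbitrary $\mathbf p\in\Delta_N$ by vectors of $\mathbb D$, invoke Lemma \ref{extreme} to get indices where $\mathbf\Pi^T(x,\cdot)$ is within $\tfrac1h$ of the approximant, and combine via the triangle inequality. The only cosmetic difference is that you force the indices to be strictly increasing during the construction (by taking $i=j_{h-1}+1$), whereas the paper first produces $n_k>k$ and then extracts an increasing subsequence afterwards.
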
 
\begin{proof}
Let $x\in A_T$. To prove that $x\in R_T$, it suffices to show that each $\mathbf p\in \Delta_N$ is an accumulation point of $\left(\mathbf \Pi^T(x,m)\right)_m$. Therefore, we start by fixing $\mathbf p \in\Delta_N$ and $k\in \N$ and choose a vector $\mathbf q \in\mathbb D$ such that $\|\mathbf p-\mathbf q\|_1<\frac{1}{k}$. We claim that we can find $n_k>k$ such that
\begin{align}
    \left\|\mathbf q-\mathbf \Pi^T(x, n_k)\right\|_1\leq\frac{1}{k}.
\end{align}

We now prove the claim above. Indeed, for any for $x \in A_T$, by using Lemma \ref{extreme} we have that $(\mathbf \Pi^T(x,m))_m$ has property $P_{k,\psi_{k,T}}$. In particular, we can find $j \in N$ with $j\geq k$ and such that if $j < n < \psi_{k,T}(j)$ then
 $ \|\mathbf q-\mathbf \Pi^T(x, n)\|_1\leq\frac{1}{k}$. Hence if $n_k$ is any integer with $j < n_k < \psi_{k,T}(j)$ then \[  \|\mathbf q-\mathbf \Pi^T((x, n_k)\|_1\leq\frac{1}{k},\]
and in particular $n_k>j>k$ as required. Then we get that,
\begin{align}\label{convergence}
    \left\| \mathbf p -\mathbf \Pi^T(x,n_k)    \right\|_1\leq \left\|  \mathbf p- \mathbf q   \right\|_1 + \left\|  \mathbf q -\mathbf \Pi^T(x,n_k)    \right\|_1\leq \frac{2}{k}.
\end{align}

Since $n_k > k$, we can extract an increasing subsequence $(n_{k_u})_u$ of $(n_k)_k$. It now follows from (\ref{convergence})
that $\mathbf \Pi^T(x,n_{k_u})  \ra\mathbf p$ as $u\ra\infty$ with respect to $\|.\|_1$. Hence $\mathbf p$ is an accumulation point of
$(\mathbf \Pi^T(x,m))_m$. This completes the proof of Lemma \ref{subset}. 
\end{proof} 

We can now prove Theorem \ref{main}.
\begin{proof}[Proof of Theorem \ref{main}]
By Lemma \ref{family} it follows that for each $h\in\N$ the set $A_{6Mh,\psi_{h,T}}$ is residual in $\mathbb I$. Since $A_T:=\bigcap_{h\in\N} A_{6Mh,\psi_{h,T}}$ it follows that the set $A_T:=\bigcap{h\in\N}A_{6Mh,\psi_{h,T}}$ is residual in $\mathbb I$ as a countable intersection of residual sets. Then, by Lemma \ref{subset} since for each regular linear transformation $T$, the set $A_T$ is a subset of $R_T$ we get that $R_T$ is residual in $\mathbb I$. Finally, it can be easily seen that $[0, 1]\setminus \mathbb I$ is a countable union of nowhere dense sets and so $R_T$ is residual in $[0, 1]$.
\end{proof}

\begin{proof}[Proof of Corollary \ref{dimen}]
Assume for the sake of contradiction that $\dim_P R_T<1$. Recall that for $F\subseteq\R$ if we let $\overline{\dim}_B(F)$ denote the upper box dimension of $F$ then we have that
$$
\dim_P(F)=\inf\left\{ \sup_{n\in\N} \overline{\dim}_B(E_n)\; :\; E_1,E_2,...\subseteq\R\;\;\text{and}\;\; F\subseteq\bigcup_{n\in\N} E_n    \right\}.
$$
Then by assumption we can find a countable family of subsets $E_1,E_2,...$ of $\R$ with $R_T\subseteq\bigcup_n E_n$ such that $\sup_n \overline{\dim}_B(E_n)<1$. Since $R_T\subseteq[0,1]$ and $R_T\subseteq\bigcup_n E_n$ we get that $$
R_T\subseteq\bigcup_n\big( [0,1]\cap E_n  \big).
$$
To arrive at the desired contradiction we show that for all $n\in\N$ the set $[0,1]\cap E_n$ is nowhere dense in $[0,1]$ contradicting Theorem \ref{main}. Assume this is not the case. Then for some $n\in\N$ and some open set $U\subseteq \R$ we have that$$
[0,1]\cap U\neq \emptyset \;\;\;\;\text{and moreover}\;\;\;\;[0,1]\cap U\cap E_n\text{  is dense in  }[0,1]\cap U.
$$
In particular, we deduce that $[0,1]\cap U\subseteq \overline{[0,1]\cap U\cap E_n}$ and therefore:
\begin{align*}
    \overline{\dim}_B(E_n)&=\overline{\dim}_B(\overline{E_n})\\
    &\geq \overline{\dim}_B(\overline{[0,1]\cap U\cap E_n})\\
    &= \overline{\dim}_B(\overline{[0,1]\cap U})\\
    &\geq  \dim_P([0,1]\cap U)=1,
\end{align*}where we have used that $\dim_P([0,1]\cap U)=1$ for all open sets $U$ that intersect $[0,1]$. This is a contradiction since by assumption $\sup_n \overline{\dim}_B(E_n)<1$. This completes the proof.
\end{proof}


\begin{ack} 
The author would like to thank Professor Lars Olsen for presenting him with this problem and Paolo Leonetti for useful discussions.
\end{ack} 

\end{document}